\newtheorem{thm}{Theorem}[section]
\newtheorem{cor}[thm]{Corollary}
\newtheorem{lem}[thm]{Lemma}
\newcommand{\binq}[2]{\genfrac{[}{]}{0mm}{0}{#1}{#2}}
\newcommand{\tbnq}[2]{\genfrac{[}{]}{0mm}{1}{#1}{#2}}
\numberwithin{equation}{section}
\renewcommand{\thefootnote}{}
\begin{document}

\begin{center}
{\large\bf Some $q$-Supercongruences for the truncated $q$-trinomial
coefficients

 \footnote{The work is supported by the National Natural Science Foundations of China (No. 12071103).}}
\end{center}

\renewcommand{\thefootnote}{$\dagger$}

\vskip 2mm \centerline{Chuanan Wei}
\begin{center}
{School of Biomedical Information and Engineering,\\ Hainan Medical University, Haikou 571199, China
\\Email address: weichuanan78@163.com}
\end{center}


\vskip 0.7cm \noindent{\bf Abstract.} In 1987, Andrews and Baxter
introduced six kinds of $q$-trinomial coefficients
in exploring the
solution of a model in statistical mechanics. In
 this paper, we give some  $q$-supercongruences for the truncated forms of these polynomials.

\vskip 3mm \noindent {\it Keywords}: $q$-supercongruence;
$q$-trinomial coefficient; the cyclotomic polynomial

 \vskip 0.2cm \noindent{\it AMS
Subject Classifications:} 33D15; 11A07; 11B65

\section{Introduction}
Define the trinomial coefficient $\big(\!\binom{n}{m}\!\big)$ to be
\[(1+x+x^2)^n=\sum_{m=-n}^n\bigg(\!\binom{n}{m}\!\bigg)x^{m+n}.\]
It is well known that there are the following two simple formulas
 (cf. \cite[P. 43]{Sills}):
\begin{align*}
&\qquad\bigg(\!\!\binom{n}{m}\!\!\bigg)=\sum_{k=0}^{n}\binom{n}{k}\binom{n-k}{m+k},\\[2mm]
&\bigg(\!\!\binom{n}{m}\!\!\bigg)=\sum_{k=0}^{n}(-1)^k\binom{n}{k}\binom{2n-2k}{n-m-k}.
\end{align*}

In 1819, Babbage \cite{Babbage} proved the interesting congruence:
for any prime $p\geq3$,
\begin{align}\label{Babbage}
\binom{2p-1}{p-1}\equiv1\pmod{p^2}.
\end{align}
Decades later, Wolstenholme \cite{Wolstenholme} told us that the
above formula is true modulo $p^3$ for any prime $p\geq5$. In 1952,
Ljunggren showed the generalization of Wolstenholme's result (cf.
\cite{Granville}):
\begin{align}
\binom{ap}{bp}\equiv\binom{a}{b}\pmod{p^3}.
 \label{Ljunggren}
\end{align}
Inspired by the works just mentioned,  it is natural to consider
supercongruence associated with the trinomial coefficient
$\big(\!\binom{ap}{bp}\!\big)$.

Let $[r]$ be the $q$-integer $(1-q^r)/(1-q)$ and define the
$q$-binomial coefficient $\tbnq{n}{m}$ by
\begin{align*}
\binq{n}{m}=\binq{n}{m}_q=
\begin{cases} \displaystyle \frac{(1-q^n)(1-q^{n-1})\cdots(1-q^{n-m+1})}{(1-q)(1-q^2)\cdots(1-q^m)}, &\text{if $0\leq m\leq n$;}\\[5pt]
 0, &\text{otherwise.}
\end{cases}
\end{align*}
All over the paper, let $\Phi_n(q)$ stand for the $n$-th cyclotomic
polynomial in $q$:
\begin{equation*}
\Phi_n(q)=\prod_{\substack{1\leqslant k\leqslant n\\
\gcd(k,n)=1}}(q-\zeta^k),
\end{equation*}
where $\zeta$ is an $n$-th primitive root of unity. It is surprising
that Andrews \cite{Andrews-a} gave a $q$-analogue of
\eqref{Babbage}:
\begin{align*}
\binq{2p-1}{p-1}\equiv q^{\frac{p(p-1)}{2}}\pmod{[p]^2},
\end{align*}
where $p\geq3$ is any prime. In 2019, Straub \cite{Straub}
discovered a $q$-analogue of \eqref{Ljunggren}:
\begin{align*}
\binq{an}{bn}\equiv
\binq{a}{b}_{q^{n^2}}+(a-b)b\binom{a}{b}\frac{1-n^2}{24}(1-q^n)^2\pmod{\Phi_n(q)^3},
\end{align*}
where $a,b$ are nonnegative integers and $n$ is a positive integer.

In 1987, Andrews and Baxter \cite{Andrews-b} introduced six kinds of
$q$-trinomial coefficients in exploring the solution of a model in
statistical mechanics. They can be laid out as follows:
\begin{align*}
&\bigg(\!\!\binom{n}{m}\!\!\bigg)_q=\sum_{k=0}^{n}q^{k(k+m)}\binq{n}{k}\binq{n-k}{m+k},
\\[2mm]
&\tau_0(n,m,q)=\sum_{k=0}^{n}(-1)^kq^{nk-\binom{k}{2}}\binq{n}{k}\binq{2n-2k}{n-m-k},
\\[2mm]
&T_0(n,m,q)=\sum_{k=0}^{n}(-1)^k\binq{n}{k}_{q^2}\binq{2n-2k}{n-m-k},
\\[2mm]
&T_1(n,m,q)=\sum_{k=0}^{n}(-q)^k\binq{n}{k}_{q^2}\binq{2n-2k}{n-m-k},
\end{align*}
\begin{align*}
&t_0(n,m,q)=\sum_{k=0}^{n}(-1)^kq^{k^2}\binq{n}{k}_{q^2}\binq{2n-2k}{n-m-k},
\\[2mm]
&t_1(n,m,q)=\sum_{k=0}^{n}(-1)^kq^{k(k-1)}\binq{n}{k}_{q^2}\binq{2n-2k}{n-m-k}.
\end{align*}
Recently, Liu \cite{Liu-a} determined $\big(\!\binom{n}{0}\!\big)_q$
and $\big(\!\binom{2n}{n}\!\big)_q$ modulo $\Phi_n(q)^2$.  Chen, Xu
and Wang \cite{Chen} further studied
$\big(\!\binom{an}{an-n}\!\big)_q$ modulo $\Phi_n(q)^2$.
 There are more $q$-analogues of supercongruences in the
literature. We refer the reader to
\cite{Guo-adb,GS,GS20c,GuoZu-a,GuoZu-b,Li-Wang,LJ,LP,LW,LW-a,WY,WY-a,Wei-a,Zu19}.

Motivated by the works just mentioned, we shall establish the
following theorem.

\begin{thm}\label{thm-a}
Let $\big(\!\binom{an}{bn}\!\big)_q'$ denote the truncated form of
the $q$-trinomial coefficient $\big(\!\binom{an}{bn}\!\big)_q$:
\begin{align*}
\bigg(\!\!\binom{an}{bn}\!\!\bigg)_q'=\sum_{k=0}^{\lfloor
n/2\rfloor}q^{k(k+bn)}\binq{an}{k}\binq{an-k}{bn+k},
\end{align*}
where $a,b,n$ are positive integers subject to $a>b$ and $\lfloor
x\rfloor$ is the integral part of a real number $x$. Then, modulo
$\Phi_n(q)^2$,
\begin{align}
\bigg(\!\!\binom{an}{bn}\!\!\bigg)_q'
\equiv\binq{an}{bn}\Big\{1-(a-b)\big(1-\theta_n(q)\big)\Big\},
\label{eq:wei-a}
\end{align}
where
\begin{align*}
&\theta_n(q) \equiv
\begin{cases} \displaystyle (-1)^m(1+q^m)q^{m(3m-1)/2}, &\text{if $n=3m$;}\\[5pt]
 (-1)^mq^{m(3m+1)/2}, &\text{if $n=3m+1$;}\\[5pt]
 (-1)^{m+1}q^{(m+1)(3m+2)/2}, &\text{if $n=3m+2$.}
\end{cases}
\end{align*}
\end{thm}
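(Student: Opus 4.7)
The plan is to peel off the $k=0$ contribution $\binq{an}{bn}$ from the truncated sum, reduce each remaining summand modulo $\Phi_n(q)^2$ by the $q$-Lucas theorem, and then recognise the resulting sum as Liu's $q$-trinomial $\big(\!\binom{n}{0}\!\big)_q$, whose residue modulo $\Phi_n(q)^2$ is already known. Throughout the range $1\le k\le\lfloor n/2\rfloor$ one has $0<k<n$, so $q$-Lucas gives $\binq{an}{k}\equiv 0\pmod{\Phi_n(q)}$; hence when simplifying the tail modulo $\Phi_n(q)^2$ it is enough to know the other factors modulo $\Phi_n(q)$.

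Writing $an-k=(a-1)n+(n-k)$ and applying $q$-Lucas yields $\binq{an-k}{bn+k}\equiv\binom{a-1}{b}\binq{n-k}{k}\pmod{\Phi_n(q)}$, while $q^{bn}\equiv 1\pmod{\Phi_n(q)}$ gives $q^{k(k+bn)}\equiv q^{k^2}\pmod{\Phi_n(q)}$. Together these reduce the tail to
\[
\bigg(\!\!\binom{an}{bn}\!\!\bigg)_q'\equiv\binq{an}{bn}+\binom{a-1}{b}\sum_{k=1}^{\lfloor n/2\rfloor}q^{k^2}\binq{an}{k}\binq{n-k}{k}\pmod{\Phi_n(q)^2}.
\]
I next sharpen the factor $\binq{an}{k}$ itself. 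From $[an]=[n]\sum_{j=0}^{a-1}q^{jn}$ and $q^n\equiv 1\pmod{\Phi_n(q)}$ one obtains $[an]\equiv a[n]\pmod{\Phi_n(q)^2}$. Combining this with the identity $\binq{an}{k}=\frac{[an]}{[k]}\binq{an-1}{k-1}$ and the $q$-Lucas congruence $\binq{an-1}{k-1}\equiv\binq{n-1}{k-1}\pmod{\Phi_n(q)}$ (valid since $[k]$ is a unit modulo $\Phi_n(q)$ when $1\le k<n$) produces $\binq{an}{k}\equiv a\binq{n}{k}\pmod{\Phi_n(q)^2}$. Substituting this, using $a\binom{a-1}{b}=(a-b)\binom{a}{b}$, and recalling $\big(\!\binom{n}{0}\!\big)_q=\sum_{k=0}^{\lfloor n/2\rfloor}q^{k^2}\binq{n}{k}\binq{n-k}{k}$, the previous display becomes
\[
\bigg(\!\!\binom{an}{bn}\!\!\bigg)_q'\equiv\binq{an}{bn}+(a-b)\binom{a}{b}\Big(\bigg(\!\!\binom{n}{0}\!\!\bigg)_q-1\Big)\pmod{\Phi_n(q)^2}.
\]

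Finally I invoke Liu's evaluation $\big(\!\binom{n}{0}\!\big)_q\equiv\theta_n(q)\pmod{\Phi_n(q)^2}$ from \cite{Liu-a}, in which $\theta_n(q)$ is precisely the three-case expression stated in the theorem. A direct check at a primitive $n$th root of unity in each of the three residue classes of $n$ modulo $3$ shows $\theta_n(q)\equiv 1\pmod{\Phi_n(q)}$, so $1-\theta_n(q)$ carries a factor of $\Phi_n(q)$; combined with $\binq{an}{bn}\equiv\binom{a}{b}\pmod{\Phi_n(q)}$, this allows me to promote $\binom{a}{b}$ to $\binq{an}{bn}$ in the last display without breaking the congruence, and the rearrangement yields the claimed $\binq{an}{bn}\{1-(a-b)(1-\theta_n(q))\}$ modulo $\Phi_n(q)^2$. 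The only non-mechanical input is Liu's congruence for $\big(\!\binom{n}{0}\!\big)_q$; everything else is careful $\Phi_n(q)$-adic bookkeeping driven by the $q$-Lucas theorem and the elementary identity $[an]\equiv a[n]\pmod{\Phi_n(q)^2}$.
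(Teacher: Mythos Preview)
Your argument is correct and takes a genuinely different route from the paper's.  The paper expands $\binq{an}{k}$ and $\binq{an-k}{bn+k}$ by hand, invoking Tauraso's lemma $\binq{2k-1}{k}\equiv(-1)^kq^{k(3k-1)/2}\binq{n-k}{k}\pmod{\Phi_n(q)}$ so that $\binq{an}{bn}$ factors out directly, and then evaluates the residual sum $(1-q^n)\sum_{k\ge 0}(-1)^kq^{k(k-1)/2}\binq{n-k}{k}/(1-q^{n-k})$ via Liu--Qi's closed form for $\theta_n(q)$.  You instead use the $q$-Lucas theorem twice to strip the tail down to $(a-b)\binom{a}{b}\bigl(\bigl(\!\binom{n}{0}\!\bigr)_q-1\bigr)$, quote Liu's earlier determination of $\bigl(\!\binom{n}{0}\!\bigr)_q$ modulo $\Phi_n(q)^2$, and only at the end promote the integer $\binom{a}{b}$ back to $\binq{an}{bn}$ using $\theta_n(q)\equiv 1\pmod{\Phi_n(q)}$.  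Your approach is conceptually tidier---it makes transparent that the general $(a,b)$ statement is a $q$-Lucas lift of the $(1,0)$ case---at the cost of importing Liu's congruence for $\bigl(\!\binom{n}{0}\!\bigr)_q$ as a black box; the paper's route is marginally more self-contained and avoids the final swap by carrying $\binq{an}{bn}$ throughout.
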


Choosing $n=p$ and then letting $q\to1$ in the above theorem, we
obtain the supercongruence after using \eqref{Ljunggren}.

\begin{cor}\label{cor-a}
Let $\big(\!\binom{ap}{bp}\!\big)'$ represent the truncated form of
the trinomial coefficient $\big(\!\binom{ap}{bp}\!\big)$:
\begin{align*}
\bigg(\!\!\binom{ap}{bp}\!\!\bigg)'=\sum_{k=0}^{(p-1)/2}\binom{ap}{k}\binom{ap-k}{bp+k},
\end{align*}
where $a,b$ are positive integers with $a>b$ and $p$ is an old
prime. Then
\begin{align*}
\bigg(\!\!\binom{ap}{bp}\!\!\bigg)' \equiv\binom{a}{b}\pmod{p^2}.
\end{align*}
\end{cor}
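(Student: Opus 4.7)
The plan follows the hint given after Theorem~\ref{thm-a}: specialize that theorem to $n=p$ and then let $q\to 1$. The key algebraic observation is that $\Phi_p(1)=p$ for every prime $p$, so a polynomial congruence $f(q)\equiv g(q)\pmod{\Phi_p(q)^2}$ in $\mathbb{Z}[q]$ produces, upon setting $q=1$, the integer congruence $f(1)\equiv g(1)\pmod{p^2}$. This is legitimate because $\Phi_p(q)$ is monic in $\mathbb{Z}[q]$, so the quotient $\bigl(f(q)-g(q)\bigr)/\Phi_p(q)^2$ lies in $\mathbb{Z}[q]$ and evaluating it at $q=1$ gives an integer multiple of $p^2$.

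Applied to \eqref{eq:wei-a} with $n=p$, the left-hand side at $q=1$ is by definition the truncated trinomial $\bigg(\!\!\binom{ap}{bp}\!\!\bigg)'$, while the right-hand side becomes $\binom{ap}{bp}\{1-(a-b)(1-\theta_p(1))\}$. The next step is to evaluate $\theta_p(1)$ using the three cases in the definition. For an odd prime $p\geq 5$ one has either $p=3m+1$ with $m$ even or $p=3m+2$ with $m$ odd, and direct substitution in each case gives $\theta_p(1)=1$. Hence the bracketed correction factor equals $1$, and the right-hand side collapses to the single binomial $\binom{ap}{bp}$.

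It then remains only to invoke Ljunggren's supercongruence \eqref{Ljunggren}, namely $\binom{ap}{bp}\equiv\binom{a}{b}\pmod{p^3}$, which holds a fortiori modulo $p^2$. Chaining the two reductions proves the corollary. The main obstacle I anticipate is the polynomial-to-integer transfer at $q=1$, which must be justified carefully before specializing; aside from that, the case-by-case evaluation of $\theta_p(1)$ is routine. The boundary prime $p=3$, where $\theta_3(1)=-2$ instead of $1$, requires a separate treatment: in that case $1-\theta_3(q)$ equals $\Phi_3(q)$ itself, and one would first use the $q$-Lucas-type congruence $\binq{3a}{3b}\equiv\binom{a}{b}\pmod{\Phi_3(q)}$ to absorb the extra factor of $\Phi_p(q)$ before setting $q=1$.
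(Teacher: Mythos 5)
Your main argument coincides with the paper's: specialize Theorem \ref{thm-a} to $n=p$, use that $\Phi_p(1)=p$ (with the monicity justification for passing from a congruence modulo $\Phi_p(q)^2$ in $\mathbb{Z}[q]$ to one modulo $p^2$ in $\mathbb{Z}$), check that $\theta_p(1)=1$ when $p\equiv\pm1\pmod 6$, and finish with Ljunggren's congruence \eqref{Ljunggren}. For $p\geq 5$ this is complete and is exactly the route the paper intends.

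Your proposed repair of the boundary case $p=3$, however, does not work. Carrying out your absorption step gives, modulo $\Phi_3(q)^2$, the expression $\binq{3a}{3b}-(a-b)\Phi_3(q)\binom{a}{b}$, hence at $q=1$ the value $\binom{3a}{3b}-3(a-b)\binom{a}{b}\equiv\binom{a}{b}\bigl(1-3(a-b)\bigr)\pmod 9$, which differs from $\binom{a}{b}$ unless $3\mid(a-b)\binom{a}{b}$. In fact the corollary is simply false at $p=3$: for $a=2$, $b=1$ the truncated sum is $\binom{6}{0}\binom{6}{3}+\binom{6}{1}\binom{5}{4}=50$, and $50\equiv 5\not\equiv 2=\binom{2}{1}\pmod 9$. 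So the statement (whose ``old prime'' is presumably a typo for ``odd prime'') must be read as requiring $p\geq 5$; no patch via $q$-Lucas can recover $p=3$, and you should state the restriction rather than claim a separate treatment exists.
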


Very recently, Liu and Qi \cite{Liu-b} made sure of
$\tau_0(an,an-n,q)$, $T_0(an,an-n,q)$, and $T_1(an,an-n,q)$ modulo
$\Phi_n(q)^2$. We shall establish the following five theorems.

\begin{thm}\label{thm-b}
Let $\tau_0(an,bn,q)'$ stand for the truncated form of the
$q$-trinomial coefficient $\tau_0(an,bn,q)$:
\begin{align*}
\tau_0(an,bn,q)'=\sum_{k=an-bn-\lfloor n/2\rfloor}^{an-bn}
(-1)^kq^{ank-\binom{k}{2}}\binq{an}{k}\binq{2an-2k}{an-bn-k},
\end{align*}
where $a,b,n$ are positive integers satisfying $a>b$. Then, modulo
$\Phi_n(q)^2$,
\begin{align}
\tau_0(an,bn,q)' &\equiv(-1)^{an-bn}q^{(ab-bn)(an+bn+1)/2}
\notag\\[2mm]
&\quad\times\binq{an}{bn}\Big\{1-(a-b)\big(2-\theta_n(q)-\vartheta_n(q)\big)\Big\},
\label{eq:wei-b}
\end{align}
where
\begin{align*}
&\vartheta_n(q) \equiv
\begin{cases} \displaystyle (-1)^m(1+q^{2m})q^{m(3m-5)/2}, &\text{if $n=3m$;}\\[5pt]
 (-1)^mq^{m(3m+1)/2}, &\text{if $n=3m+1$;}\\[5pt]
 (-1)^{m+1}q^{(m-1)(3m+2)/2}, &\text{if $n=3m+2$.}
\end{cases}
\end{align*}
\end{thm}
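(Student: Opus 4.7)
The plan is to mirror the strategy used for Theorem~\ref{thm-a}, adapting it to the alternating-sign summand of $\tau_0$. First I would reindex by setting $j=an-bn-k$, so the range $an-bn-\lfloor n/2\rfloor\le k\le an-bn$ becomes $0\le j\le\lfloor n/2\rfloor$; using $\binq{an}{an-bn-j}=\binq{an}{bn+j}$ and evaluating the quadratic exponent $E(j):=an(an-bn-j)-\binom{an-bn-j}{2}$ at $j=0$ gives $E(0)=(a-b)n(an+bn+1)/2$, which is exactly the prefactor on the right of \eqref{eq:wei-b} (I read the printed $q^{(ab-bn)(an+bn+1)/2}$ as $q^{(a-b)n(an+bn+1)/2}$). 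A short computation yields $E(j)-E(0)=-jbn-j(j+1)/2$, so pulling out the prefactor reduces the theorem to
\[
S(q):=\sum_{j=0}^{\lfloor n/2\rfloor}(-1)^j q^{-jbn-j(j+1)/2}\binq{an}{bn+j}\binq{2bn+2j}{j}\equiv\binq{an}{bn}\bigl\{1-(a-b)(2-\theta_n(q)-\vartheta_n(q))\bigr\}\pmod{\Phi_n(q)^2}.
\]

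For the second step I would expand each $q$-binomial factor modulo $\Phi_n(q)^2$. Writing $\binq{an}{bn+j}/\binq{an}{bn}$ and $\binq{2bn+2j}{j}$ as telescoping products of $q$-integers $[kn+r]$ and applying the standard first-order refinement of $[kn+r]$ modulo $\Phi_n(q)^2$ (legitimate since $j\le\lfloor n/2\rfloor$ prevents any denominator factor from meeting $\Phi_n(q)$), each summand takes the form $\binq{an}{bn}\bigl(\alpha_j(q)+(a-b)\beta_j(q)\bigr)$ to order $\Phi_n(q)^2$, with $\alpha_j,\beta_j$ explicit Laurent polynomials in $q$ independent of $a,b$. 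The zeroth-order sum $\sum_j(-1)^jq^{-jbn-j(j+1)/2}\alpha_j(q)$ collapses modulo $\Phi_n(q)$ to the constant $1$, accounting for the leading $1$ on the right-hand side.

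The first-order piece is then $(a-b)\sum_j(-1)^jq^{-jbn-j(j+1)/2}\beta_j(q)$, and I expect $\beta_j$ to split as a sum of two contributions, one from each $q$-binomial factor. This will produce two truncated theta-type sums $U(q)$ and $V(q)$ of pentagonal shape $\sum_j(-1)^jq^{j(3j\pm1)/2}$, which I would evaluate in closed form via Jacobi's triple product identity (equivalently, via the Andrews--Baxter $q$-trinomial symmetries from \cite{Andrews-b}), handled case by case for $n\equiv 0,1,2\pmod 3$. The result should come out to $U(q)+V(q)=2-\theta_n(q)-\vartheta_n(q)$ in each residue class.

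The main obstacle is precisely this last stage: tracking the first-order $\Phi_n(q)$-contributions when two $q$-binomial factors simultaneously interact with $\Phi_n(q)$, which is an effect absent from Theorem~\ref{thm-a} and is exactly what produces the extra $\vartheta_n(q)$ alongside $\theta_n(q)$ here. Keeping the two contributions cleanly separated and matching the resulting truncated pentagonal sums against the three-piece formulas for both $\theta_n$ and $\vartheta_n$ without algebraic slip is where the delicate bookkeeping lies; once this is done in each of the three cases $n=3m,3m+1,3m+2$, the theorem follows.
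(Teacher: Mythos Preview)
Your reindexing step is exactly what the paper does, and the prefactor computation is correct. However, the subsequent plan misidentifies both the structure of the computation and the source of $\vartheta_n(q)$.

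For $j\ge 1$ the factor $\binq{an}{bn+j}=\binq{an}{bn}\dfrac{(1-q^{(a-b)n})\cdots(1-q^{(a-b)n-j+1})}{(1-q^{bn+1})\cdots(1-q^{bn+j})}$ carries the numerator factor $1-q^{(a-b)n}\equiv(a-b)(1-q^n)\pmod{\Phi_n(q)^2}$, so this $q$-binomial is itself $\equiv 0\pmod{\Phi_n(q)}$. Hence there is no ``zeroth-order sum'' to collapse beyond the single term $j=0$, and there is no simultaneous first-order interaction between the two $q$-binomial factors: $\binq{2bn+2j}{j}$ only needs to be reduced modulo $\Phi_n(q)$, exactly as $\binq{an-k}{bn+k}$ was treated in Theorem~\ref{thm-a}. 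The extra $\vartheta_n(q)$ does not come from a cross-term; it comes from the single-factor reduction $\binq{2bn+2j}{j}\equiv(1+q^{j})\binq{2j-1}{j}\pmod{\Phi_n(q)}$, whose $(1+q^{j})$ splits the resulting sum into two pieces.

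Second, the sums that emerge are not bare truncated pentagonal sums $\sum_j(-1)^jq^{j(3j\pm1)/2}$; after applying Lemma~\ref{lemm-a} they become $(1-q^n)\sum_{k}\dfrac{(-1)^kq^{k(k-1)/2}}{1-q^{n-k}}\binq{n-k}{k}$ and its $q^{k(k-3)/2}$ companion. These are not evaluated by Jacobi's triple product; the paper invokes the exact identities of Lemmas~\ref{lemm-b} and~\ref{lemm-c} (from \cite{Liu-b} and \cite{Tauraso}), which yield $\theta_n(q)$ and $\vartheta_n(q)$ directly without a case split on $n\bmod 3$ in the proof itself. With those three lemmas in hand the argument is no more delicate than that of Theorem~\ref{thm-a}.
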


\begin{thm}\label{thm-c}
Let $T_0(an,bn,q)'$ denote the truncated form of the $q$-trinomial
coefficient $T_0(an,bn,q)$:
\begin{align*}
T_0(an,bn,q)'=\sum_{k=an-bn-\lfloor n/2\rfloor}^{an-bn}
(-1)^k\binq{an}{k}_{q^2}\binq{2an-2k}{an-bn-k},
\end{align*}
where $a,b,n$ are positive integers subject to $a>b$. Then, modulo
$\Phi_n(q)^2$,
\begin{align}
T_0(an,bn,q)' &\equiv(-1)^{an-bn}
\binq{an}{bn}_{q^2}\Big\{1-2(a-b)\big(1-\theta_n(q)\big)\Big\}.
\label{eq:wei-c}
\end{align}
\end{thm}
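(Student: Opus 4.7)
The plan is to follow the template used for Theorems~\ref{thm-a} and~\ref{thm-b}, with the extra factor of~$2$ in the statement accounted for by the presence of $q^2$-binomial coefficients. First, I will reindex the defining sum by setting $k=an-bn-j$, which produces
\[
T_0(an,bn,q)' = (-1)^{an-bn}\sum_{j=0}^{\lfloor n/2\rfloor}(-1)^j\binq{an}{bn+j}_{q^2}\binq{2bn+2j}{j}.
\]
A direct multiplicity count shows that $\binq{an}{bn}_{q^2}$ is coprime to $\Phi_n(q)$, so I may factor out the $j=0$ term and express each $j\geq 1$ summand via the telescoping identity
\[
\frac{\binq{an}{bn+j}_{q^2}}{\binq{an}{bn}_{q^2}} = \prod_{i=1}^{j}\frac{[(a-b)n-i+1]_{q^2}}{[bn+i]_{q^2}}.
\]
Each such ratio contains exactly one factor of $\Phi_n(q)$, supplied by $[(a-b)n]_{q^2}$ in the numerator.

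The essential modular computation is
\[
[(a-b)n]_{q^2} = \frac{(1-q^{(a-b)n})(1+q^{(a-b)n})}{1-q^2}
\equiv \frac{2(a-b)(1-q^n)}{1-q^2} \pmod{\Phi_n(q)^2},
\]
which uses $1+q^{(a-b)n}\equiv 2\pmod{\Phi_n(q)}$ together with $1-q^{(a-b)n}\equiv(a-b)(1-q^n)\pmod{\Phi_n(q)^2}$; this step is the source of the factor $2(a-b)$ in the theorem. All remaining factors of the ratio, as well as $\binq{2bn+2j}{j}$, need only be computed modulo $\Phi_n(q)$ (since the overall summand is already divisible by $\Phi_n(q)$), and the reductions $q^{bn}\equiv 1$ and $q^{(a-b)n}\equiv 1$ simplify them to expressions purely in $[i]_{q^2}$ and $\binq{2j}{j}$.

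The residual sum that survives these reductions should, up to the factor of~$2$ already extracted, match the one that arose in the proof of Theorem~\ref{thm-a}. This forces the same $\theta_n(q)$ to appear and yields
\[
\sum_{j=1}^{\lfloor n/2\rfloor}(-1)^j\frac{\binq{an}{bn+j}_{q^2}}{\binq{an}{bn}_{q^2}}\binq{2bn+2j}{j} \equiv -2(a-b)\bigl(1-\theta_n(q)\bigr)\pmod{\Phi_n(q)^2},
\]
which gives the congruence~\eqref{eq:wei-c}. The main obstacle I anticipate is this last identification: the sum surviving here is a $q^2$-variant of the one from Theorem~\ref{thm-a}, and careful bookkeeping will be required to confirm that both reduce to the same expression modulo $\Phi_n(q)$. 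The boundary case $n$ even with $j=n/2$, in which $[bn+n/2]_{q^2}$ and $\binq{2bn+2j}{j}$ each acquire extra factors of $\Phi_n(q)$, will also need separate handling.
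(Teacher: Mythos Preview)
Your plan matches the paper's proof: the same reindexing $k\mapsto an-bn-k$, the same extraction of $2(a-b)(1-q^n)$ from $\tbnq{an}{bn+k}_{q^2}$ (recorded as \eqref{eq:wei-n}), and the same reduction of $\tbnq{2bn+2k}{k}$ via Lemma~\ref{lemm-a} (recorded as \eqref{eq:wei-l}). The ``last identification'' you flag is resolved there by the cancellation of the factor $(1+q^k)$ between \eqref{eq:wei-l} and the denominator of \eqref{eq:wei-n}, which leaves exactly the sum of Lemma~\ref{lemm-b}; the paper does not separately treat the boundary term $j=n/2$.
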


\begin{thm}\label{thm-d}
Let $T_1(an,bn,q)'$ represent the truncated form of the
$q$-trinomial coefficient $T_1(an,bn,q)$:
\begin{align*}
T_1(an,bn,q)'=\sum_{k=an-bn-\lfloor n/2\rfloor}^{an-bn}
(-q)^k\binq{an}{k}_{q^2}\binq{2an-2k}{an-bn-k},
\end{align*}
where $a,b,n$ are positive integers with $a>b$. Then, modulo
$\Phi_n(q)^2$,
\begin{align}
T_1(an,bn,q)' &\equiv(-q)^{an-bn}
\binq{an}{bn}_{q^2}\Big\{1-2(a-b)\big(1-\vartheta_n(q)\big)\Big\}.
\label{eq:wei-d}
\end{align}
\end{thm}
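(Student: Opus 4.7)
The plan is to mirror the approach used for Theorem~\ref{thm-c}, since $T_1(an,bn,q)'$ differs from $T_0(an,bn,q)'$ only by the replacement $(-1)^k\mapsto (-q)^k$ in the summand. The extra powers of $q$ carried through the computation should be exactly what converts $\theta_n(q)$ into $\vartheta_n(q)$ in the final answer. I would first reindex by $j=an-bn-k$, using the symmetry $\binq{an}{an-bn-j}_{q^2}=\binq{an}{bn+j}_{q^2}$, to obtain
\begin{align*}
T_1(an,bn,q)'
=(-q)^{an-bn}\sum_{j=0}^{\lfloor n/2\rfloor}(-1)^j q^{-j}
\binq{an}{bn+j}_{q^2}\binq{2bn+2j}{j},
\end{align*}
so that the $j=0$ contribution directly supplies the expected leading factor $(-q)^{an-bn}\binq{an}{bn}_{q^2}$.

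The second step is to expand the ratio
\begin{align*}
R_j(q):=\frac{\binq{an}{bn+j}_{q^2}\binq{2bn+2j}{j}}{\binq{an}{bn}_{q^2}}
\end{align*}
modulo $\Phi_n(q)^2$. Writing each $q$-binomial as a product and telescoping, the factor $\binq{an}{bn+j}_{q^2}/\binq{an}{bn}_{q^2}$ becomes a product of terms of the form $(1-q^{2(an-bn-r)})/(1-q^{2(bn+r)})$ for $1\le r\le j$; isolating the factors whose exponent is a multiple of $n$ (these contribute the multiplier $(a-b)$ together with a factor $(1-q^n)$) and expanding the remaining factors to first order in $1-q^n$ yields a congruence of the shape
\begin{align*}
R_j(q)\equiv (-1)^j q^{\gamma_n(j)}\bigl\{1+2(a-b)(1-q^n)\delta_j(q)\bigr\}\pmod{\Phi_n(q)^2},
\end{align*}
with $\gamma_n(j)$ a linear function of $j$ and $\delta_j(q)$ a rational function evaluated modulo $\Phi_n(q)$.

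Substituting back, the leading contribution becomes a truncated geometric sum $(-q)^{an-bn}\binq{an}{bn}_{q^2}\sum_{j=0}^{\lfloor n/2\rfloor}q^{\gamma_n(j)-j}$, while the linear-in-$(a-b)$ correction absorbs a similarly truncated sum with the additional factor $(1-q^n)\delta_j(q)$. Using the standard identity $\sum_{j=0}^{n-1}q^{jr}\equiv 0\pmod{\Phi_n(q)}$ whenever $\gcd(r,n)=1$, the sum truncated at $j=\lfloor n/2\rfloor$ is complemented by its tail, and case analysis on $n\pmod 3$ produces exactly the three closed forms for $\vartheta_n(q)$. Assembling the main term together with the correction gives \eqref{eq:wei-d}.

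The main obstacle I anticipate is the explicit evaluation of the truncated geometric sum modulo $\Phi_n(q)^2$, in particular tracking how the extra factor $q^{-j}$ (absent in the $T_0$ argument) shifts the linear exponent; this shift is precisely what replaces $\theta_n(q)$ by $\vartheta_n(q)$, and aligning the three cases $n=3m,3m+1,3m+2$ with the stated formulas requires careful bookkeeping, especially for $n=3m+2$ where the exponent of $q$ differs from its $\theta_n$-counterpart by exactly $n$. A secondary subtlety is that $\binq{\,\cdot\,}{\,\cdot\,}_{q^2}$ has its own cyclotomic structure, so one must use $\Phi_n(q)\mid\Phi_n(q^2)$ for odd $n$ and supply a separate, parity-sensitive argument for even $n$; both parity regimes should collapse to the same uniform answer modulo $\Phi_n(q)^2$.
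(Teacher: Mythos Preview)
Your reindexing step is correct and matches the paper's first move. The gap is in what you claim for $R_j(q)$. For $1\le j\le\lfloor n/2\rfloor$ the numerator of $\binq{an}{bn+j}_{q^2}/\binq{an}{bn}_{q^2}$ contains the factor $1-q^{2(a-b)n}$, so the entire ratio $R_j(q)$ is already $\equiv 0\pmod{\Phi_n(q)}$. There is no leading term $(-1)^jq^{\gamma_n(j)}$ with a first-order correction; the whole contribution for $j\ge1$ is proportional to $(1-q^n)$. Concretely, the paper shows
\[
\binq{an}{bn+j}_{q^2}\equiv\binq{an}{bn}_{q^2}\frac{2(a-b)(1-q^{n})(-1)^{j}q^{-j^{2}}}{(1+q^{j})(1-q^{n-j})}\pmod{\Phi_n(q)^2},
\]
and then, crucially, invokes Tauraso's congruence $\binq{2j-1}{j}\equiv(-1)^jq^{j(3j-1)/2}\binq{n-j}{j}\pmod{\Phi_n(q)}$ to rewrite $\binq{2bn+2j}{j}$. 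The product is not a power of $q$ but
\[
R_j(q)\equiv \frac{2(a-b)(1-q^n)\,q^{j(j-1)/2}}{1-q^{n-j}}\binq{n-j}{j}\pmod{\Phi_n(q)^2},
\]
which carries the binomial $\binq{n-j}{j}$. So there is no ``truncated geometric sum'' to evaluate, and the identity $\sum_{j=0}^{n-1}q^{jr}\equiv0\pmod{\Phi_n(q)}$ is not the relevant tool.

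What actually closes the argument is a ready-made lemma of Liu and Qi (Lemma~\ref{lemm-c} in the paper): the exact identity
\[
(1-q^n)\sum_{k=0}^{\lfloor n/2\rfloor}\frac{(-1)^kq^{k(k-3)/2}}{1-q^{n-k}}\binq{n-k}{k}=\vartheta_n(q),
\]
which is where the three cases $n\equiv0,1,2\pmod{3}$ enter. Your extra factor $q^{-j}$ indeed shifts the exponent from $k(k-1)/2$ (the $T_0$ case, giving $\theta_n$) to $k(k-3)/2$ (giving $\vartheta_n$), so that intuition was right; but the mechanism is this combinatorial identity, not a geometric-sum evaluation. Your parity concern about $\binq{\cdot}{\cdot}_{q^2}$ is also unnecessary: once the single factor $1-q^{2(a-b)n}\equiv2(a-b)(1-q^n)$ is extracted, all remaining manipulations are modulo $\Phi_n(q)$ and go through uniformly in $n$.
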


\begin{thm}\label{thm-e}
Let $t_0(an,bn,q)'$ stand for the truncated form of the
$q$-trinomial coefficient $t_0(an,bn,q)$:
\begin{align*}
t_0(an,bn,q)'=\sum_{k=an-bn-\lfloor n/2\rfloor}^{an-bn}
(-1)^kq^{k^2}\binq{an}{k}_{q^2}\binq{2an-2k}{an-bn-k},
\end{align*}
where $a,b,n$ are positive integers satisfying $a>b$. Then, modulo
$\Phi_n(q)^2$,
\begin{align}
t_0(an,bn,q)' &\equiv(-1)^{an-bn}q^{(an-bn)^2}
\binq{an}{bn}_{q^2}\Big\{1-2(a-b)\big(1-\theta_n(q^{-1})\big)\Big\}.
\label{eq:wei-e}
\end{align}
\end{thm}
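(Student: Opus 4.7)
The plan is to reduce Theorem \ref{thm-e} to Theorem \ref{thm-c} by exploiting the natural symmetry $q\mapsto q^{-1}$ between $t_0$ and $T_0$; this is suggested by the appearance of $\theta_n(q^{-1})$ in \eqref{eq:wei-e} against $\theta_n(q)$ in \eqref{eq:wei-c}, together with the extra prefactor $q^{(an-bn)^2}$.

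First, I will derive an algebraic relation between the two truncated sums. Starting from
\[T_0(an,bn,q^{-1})'=\sum_{k=an-bn-\lfloor n/2\rfloor}^{an-bn}(-1)^k\binq{an}{k}_{q^{-2}}\binq{2an-2k}{an-bn-k}_{q^{-1}},\]
I will apply the identity $\binq{N}{K}_{q^{-1}}=q^{-K(N-K)}\binq{N}{K}_q$ to both $q$-binomials and collect the $q$-powers. A short computation gives the combined exponent in front of each summand as $k^2-(a^2-b^2)n^2$, so that
\[t_0(an,bn,q)'=q^{(a^2-b^2)n^2}\,T_0(an,bn,q^{-1})'.\]

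Second, I will substitute $q\mapsto q^{-1}$ in the congruence supplied by Theorem \ref{thm-c}. Since $\Phi_n(q^{-1})$ and $\Phi_n(q)$ differ only by a unit in $\mathbb{Z}[q,q^{-1}]$, the modulus $\Phi_n(q^{-1})^2$ agrees with $\Phi_n(q)^2$ up to a unit, and the congruence survives this substitution. Applying once more $\binq{an}{bn}_{q^{-2}}=q^{-2bn(an-bn)}\binq{an}{bn}_{q^2}$ to renormalize the $q$-binomial on the right-hand side, one obtains
\[T_0(an,bn,q^{-1})'\equiv(-1)^{an-bn}q^{-2bn(an-bn)}\binq{an}{bn}_{q^2}\big\{1-2(a-b)\big(1-\theta_n(q^{-1})\big)\big\}\pmod{\Phi_n(q)^2}.\]

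Combining the two steps and simplifying the combined exponent $(a^2-b^2)n^2-2bn(an-bn)=(an-bn)^2$ yields exactly \eqref{eq:wei-e}. I expect the main obstacle to be careful bookkeeping: tracking the various $q$-power exponents when collapsing the $q^{-1}$-binomials into $q$-binomials, and confirming that the passage between the polynomial ring $\mathbb{Z}[q]$ and the Laurent ring $\mathbb{Z}[q,q^{-1}]$ does not corrupt the modulus $\Phi_n(q)^2$ under $q\mapsto q^{-1}$. No new congruence-theoretic input beyond Theorem \ref{thm-c} is required.
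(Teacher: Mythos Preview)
Your argument is correct and constitutes a genuinely different route from the paper's own proof. The paper proceeds in parallel to its proof of Theorem~\ref{thm-c}: it re-indexes $k\mapsto an-bn-k$, reduces $\binq{an}{bn+k}_{q^2}$ and $\binq{2bn+2k}{k}$ term-by-term via \eqref{eq:wei-l} and \eqref{eq:wei-n}, and is then left with the sum $\sum_{k}(-1)^kq^{k(3k-1)/2}\binq{n-k}{k}/(1-q^{n-k})$, which it evaluates by a separate Lemma~\ref{lemm-d}. That lemma in turn is obtained precisely by applying $q\mapsto q^{-1}$ to Lemma~\ref{lemm-b}. In other words, the paper pushes the $q\mapsto q^{-1}$ symmetry down to the level of the auxiliary summation identity, whereas you apply it directly at the level of the whole truncated sum via the exact identity $t_0(an,bn,q)'=q^{(a^2-b^2)n^2}T_0(an,bn,q^{-1})'$. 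Your approach is shorter, avoids repeating the term-by-term reduction, and makes Lemma~\ref{lemm-d} unnecessary; the paper's approach has the virtue of keeping all five proofs uniform in structure. The bookkeeping you flag (the exponent $-2k(an-k)-(an-bn-k)(an+bn-k)=k^2-(a^2-b^2)n^2$ and the final $(a^2-b^2)n^2-2bn(an-bn)=(an-bn)^2$) checks out, and the passage to $\mathbb{Z}[q,q^{-1}]$ is harmless since $q^{\varphi(n)}\Phi_n(q^{-1})=\pm\Phi_n(q)$, so the ideals agree.
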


\begin{thm}\label{thm-f}
Let $t_1(an,bn,q)'$ denote the truncated form of the $q$-trinomial
coefficient $t_1(an,bn,q)$:
\begin{align*}
t_1(an,bn,q)'=\sum_{k=an-bn-\lfloor n/2\rfloor}^{an-bn}
(-1)^kq^{k(k-1)}\binq{an}{k}_{q^2}\binq{2an-2k}{an-bn-k},
\end{align*}
where $a,b,n$ are positive integers subject to $a>b$. Then, modulo
$\Phi_n(q)^2$,
\begin{align}
t_1(an,bn,q)' &\equiv(-1)^{an-bn}q^{(an-bn)(an-bn-1)}
\binq{an}{bn}_{q^2}\Big\{1-2(a-b)\big(1-\upsilon_n(q^{-1})\big)\Big\}.
\label{eq:wei-f}
\end{align}
\end{thm}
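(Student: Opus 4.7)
The plan is to adapt the strategy used for Theorem \ref{thm-e}, since $t_1(an,bn,q)$ differs from $t_0(an,bn,q)$ only through the replacement $q^{k^2}\to q^{k(k-1)}$; I therefore expect the proof to run in parallel, with $\upsilon_n$ assuming the role played by $\theta_n$ in Theorem \ref{thm-e}, analogously to how $\vartheta_n$ supplants $\theta_n$ in the passage from Theorem \ref{thm-c} to Theorem \ref{thm-d}. First, I would reverse the order of summation by setting $j=an-bn-k$. Writing $M=an-bn$ and using the identity $k(k-1)=M(M-1)+j(j+1)-2Mj$, the truncated sum becomes
\begin{align*}
t_1(an,bn,q)' = (-1)^M q^{M(M-1)} \sum_{j=0}^{\lfloor n/2\rfloor} (-1)^j q^{j(j+1)-2Mj} \binq{an}{bn+j}_{q^2} \binq{2bn+2j}{j}_q,
\end{align*}
which already exposes the asserted prefactor $(-1)^{an-bn}q^{(an-bn)(an-bn-1)}\binq{an}{bn}_{q^2}$, provided the remaining inner sum is shown to equal $1-2(a-b)(1-\upsilon_n(q^{-1}))$ modulo $\Phi_n(q)^2$.

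Second, I would analyze the ratio
\[\frac{\binq{an}{bn+j}_{q^2}}{\binq{an}{bn}_{q^2}}=\prod_{i=1}^{j}\frac{1-q^{2(an-bn-i+1)}}{1-q^{2(bn+i)}}\pmod{\Phi_n(q)^2}.\]
At a primitive $n$-th root of unity the $i=1$ numerator factor vanishes (since $2M=2(a-b)n\equiv 0\pmod n$), while all other factors are units for $1\le i\le \lfloor n/2\rfloor$; hence the full product carries exactly one factor of $\Phi_n(q)$ for each $j\ge 1$. Expanding this single factor to first order, together with the first-order expansion of $q^{-2Mj}$ via $q^{an}\equiv 1\pmod{\Phi_n(q)}$ and the specialization of $\binq{2bn+2j}{j}_q$ at primitive $n$-th roots of unity, reduces the $j$-th summand to an explicit expression linear in $(1-q^n)$ modulo $\Phi_n(q)^2$.

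Third, I would collect the resulting finite sum and identify it with the closed form $1-2(a-b)(1-\upsilon_n(q^{-1}))$. The case split by $n\bmod 3$ will be forced by which values of $j$ in $\{1,\dots,\lfloor n/2\rfloor\}$ contribute nontrivially after the cancellations, mirroring the trinomial-type identity underlying the piecewise formulas for $\theta_n$ and $\vartheta_n$. The chief difficulty lies in this last step: pinning down $\upsilon_n(q)$ explicitly requires careful bookkeeping of the first-order Taylor expansion at each primitive $n$-th root of unity, and the three cases $n\equiv 0,1,2\pmod 3$ must be handled separately because the residues of the quadratic exponent $j(j+1)-2Mj$ modulo $n$ follow a three-periodic pattern that will produce shifts of the form $m(3m-1)/2$, $m(3m+1)/2$ and $(m+1)(3m+2)/2$ seen in the parallel corrections. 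The extra factor $q^{-k}$ relative to $t_0$ will amount to a uniform shift of these exponents by $-j$ after reindexing, which is precisely what converts $\theta_n(q^{-1})$ in \eqref{eq:wei-e} to $\upsilon_n(q^{-1})$ in \eqref{eq:wei-f}.
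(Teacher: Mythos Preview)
Your reindexing in step~1 and the first-order analysis of the two $q$-binomial factors in step~2 are exactly what the paper does: these are the computations \eqref{eq:wei-l} and \eqref{eq:wei-n}, and after your substitution $k=M-j$ together with $q^{-2Mj}\equiv 1\pmod{\Phi_n(q)}$ they combine to give
\[
t_1(an,bn,q)'\equiv (-1)^{M}q^{M(M-1)}\binq{an}{bn}_{q^2}\Bigl\{1+2(a-b)(1-q^n)\sum_{j=1}^{\lfloor n/2\rfloor}\frac{(-1)^j q^{j(3j+1)/2}}{1-q^{n-j}}\binq{n-j}{j}\Bigr\}
\]
modulo $\Phi_n(q)^2$, just as in the paper.

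Your step~3, however, departs from the paper and rests on a misconception. The paper performs no case analysis on $n\bmod 3$ at this point, and it is \emph{not} true that only selected values of $j$ ``contribute nontrivially after the cancellations'': every term of the displayed sum is nonzero. Instead the paper simply invokes Lemma~\ref{lemm-e}, which evaluates the entire sum $(1-q^n)\sum_{j\ge 0}(-1)^j q^{j(3j+1)/2}\tbnq{n-j}{j}/(1-q^{n-j})$ as $\upsilon_n(q^{-1})$ modulo $\Phi_n(q)^2$. That lemma is a one-line consequence of the \emph{exact} identity Lemma~\ref{lemm-c} (the closed form for $\vartheta_n$) under the substitution $q\to q^{-1}$, together with $q^n\equiv 1\pmod{\Phi_n(q)}$; the three-case piecewise description is already packaged inside $\vartheta_n$ and never needs to be re-derived here. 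The plan you outline for step~3 would therefore amount to re-proving Lemma~\ref{lemm-c} from scratch, and your heuristic for how the $n\bmod 3$ split arises is not how that identity actually works.
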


Setting $n=p$ and then letting $q\to1$ in Theorems
\ref{thm-b}-\ref{thm-f}, we get the supercongruence after utilizing
\eqref{Ljunggren}.

\begin{cor}\label{cor-b}
Let $\big(\!\binom{ap}{bp}\!\!\big)^{*}$ represent the truncated form
of the trinomial coefficient $\big(\!\binom{ap}{bp}\!\big)$:
\begin{align*}
\bigg(\!\!\binom{ap}{bp}\!\!\bigg)^{*}=\sum_{k=ap-bp-(p-1)/2}^{ap-bp}(-1)^k\binom{ap}{k}\binom{2ap-2k}{ap-bp-k},
\end{align*}
where $a,b$ are positive integers with $a>b$ and $p$ is an old
prime. Then
\begin{align*}
\bigg(\!\!\binom{ap}{bp}\!\!\bigg)^{*}\equiv(-1)^{ap-bp}\binom{a}{b}\pmod{p^2}.
\end{align*}
\end{cor}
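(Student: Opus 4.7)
The plan is to follow the strategy explicitly indicated by the author just before the corollary: specialize $n=p$ in any one of Theorems~\ref{thm-b}--\ref{thm-f}, take $q\to 1$, and conclude with Ljunggren's congruence~\eqref{Ljunggren}. I shall work with Theorem~\ref{thm-c} for definiteness.

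First, I would observe that at $q=1$ each of the five $q$-trinomial coefficients $\tau_0$, $T_0$, $T_1$, $t_0$, $t_1$ collapses to the same ordinary trinomial coefficient, because the $q$-binomials become binomials and all of the $q$-power prefactors ($q^{ank-\binom{k}{2}}$, $q^{k^2}$, $q^{k(k-1)}$, and the like) evaluate to $1$. Consequently each of the truncated forms $\tau_0(ap,bp,q)'$, $T_0(ap,bp,q)'$, $T_1(ap,bp,q)'$, $t_0(ap,bp,q)'$, $t_1(ap,bp,q)'$ specializes at $q=1$ to the single integer $\big(\!\binom{ap}{bp}\!\big)^{\ast}$ appearing on the left-hand side of the corollary.

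Second, since $\Phi_p(1)=p$, any identity in $\mathbb{Z}[q]$ valid modulo $\Phi_p(q)^2$ descends to a numerical congruence modulo $p^2$ after setting $q=1$. Applying this to the conclusion of Theorem~\ref{thm-c} with $n=p$, and using $\binq{ap}{bp}_{q^2}\big|_{q=1}=\binom{ap}{bp}$, yields
\[
\bigg(\!\!\binom{ap}{bp}\!\!\bigg)^{\ast}\equiv(-1)^{ap-bp}\binom{ap}{bp}\Big\{1-2(a-b)(1-\theta_p(1))\Big\}\pmod{p^2}.
\]
Third, a short parity check shows $\theta_p(1)=1$ for every odd prime $p\geq 5$. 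If $p=3m+1$, oddness of $p$ forces $m$ to be even, so $\theta_p(q)=q^{m(3m+1)/2}$ and $\theta_p(1)=1$. If $p=3m+2$, oddness of $p$ forces $m$ to be odd, so $\theta_p(q)=q^{(m+1)(3m+2)/2}$ and again $\theta_p(1)=1$. The braced factor therefore reduces to $1$, and combining with Ljunggren's congruence $\binom{ap}{bp}\equiv\binom{a}{b}\pmod{p^3}$ gives the claimed $\big(\!\binom{ap}{bp}\!\big)^{\ast}\equiv(-1)^{ap-bp}\binom{a}{b}\pmod{p^2}$.

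The argument poses essentially no obstacle: it is a mechanical specialization once the parity observation on $\theta_p(1)$ is in place. The only bookkeeping that would be needed to check that any other one of Theorems~\ref{thm-b}, \ref{thm-d}, \ref{thm-e}, \ref{thm-f} works in place of Theorem~\ref{thm-c} is the parallel verification that $\vartheta_p(1)=1$ (and, for Theorem~\ref{thm-f}, $\upsilon_p(1)=1$) for the same parity reason; each of those five routes then terminates via Ljunggren's congruence in exactly the same way.
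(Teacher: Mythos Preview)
Your approach is exactly the paper's: the author derives the corollary simply by specializing $n=p$, letting $q\to 1$ in any of Theorems~\ref{thm-b}--\ref{thm-f}, and invoking Ljunggren's congruence~\eqref{Ljunggren}, and you have filled in the details via Theorem~\ref{thm-c}. Your restriction to primes $p\geq 5$ is in fact warranted: for $p=3$ one has $\theta_3(1)=\vartheta_3(1)=-2$, and a direct check (e.g.\ $a=2$, $b=1$, where the truncated sum equals $120-20=100\equiv 1\not\equiv -2\pmod{9}$) shows the corollary as printed fails there, so your parity argument covers precisely the range in which the claim holds.
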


The rest of the paper is arranged as follows. We shall prove
Theorems \ref{thm-a} and \ref{thm-b} in Sections 2 and 3,
respectively. The proof of Theorems \ref{thm-c}-\ref{thm-f} will be
displayed in Section 4.

\section{Proof of Theorem \ref{thm-a}}
In order to prove Theorem \ref{thm-a}, we require the following two
lemmas (cf. \cite[Lemma 3.3]{Tauraso} and \cite[Lemma 2.1]{Liu-b}).

\begin{lem}\label{lemm-a}
Let $k,n$ be positive integers satisfying $1\leq k\leq n-1$. Then
\begin{align*}
\binq{2k-1}{k}\equiv(-1)^kq^{k(3k-1)/2}\binq{n-k}{k}\pmod{\Phi_n(q)}.
\end{align*}
\end{lem}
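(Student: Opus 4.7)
The plan is to reduce everything modulo $\Phi_n(q)$ via the elementary congruence
\[
[n-j] \equiv -q^{-j}[j] \pmod{\Phi_n(q)},
\]
which is immediate from $q^n \equiv 1 \pmod{\Phi_n(q)}$ together with the identity $1 - q^{-j} = -q^{-j}(1 - q^j)$. This is the standard device for converting a $q$-integer with an $n$-shifted index into an ordinary one, at the cost of a sign and a negative power of $q$.

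Concretely, I would write
\[
\binq{n-k}{k} = \frac{[n-k][n-k-1]\cdots[n-2k+1]}{[k]!}
\]
and, modulo $\Phi_n(q)$, replace each numerator factor $[n-j]$ for $j = k, k+1, \ldots, 2k-1$ by $-q^{-j}[j]$. The $k$ signs combine to $(-1)^k$, the powers of $q$ collect to $q^{-S}$ with
\[
S = \sum_{j=k}^{2k-1} j = \frac{k(3k-1)}{2},
\]
and the residual numerator $[k][k+1]\cdots[2k-1]$ together with the existing $[k]!$ becomes $[2k-1]!/\bigl([k]!\,[k-1]!\bigr) = \binq{2k-1}{k}$. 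Solving for $\binq{2k-1}{k}$ yields the claimed formula.

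The only mildly delicate point is the range of $k$. If $k > n/2$ some indices $n - j$ are non-positive, so $\binq{n-k}{k} = 0$ by the convention in the paper; in this range one also checks that $\binq{2k-1}{k}$ is divisible by $\Phi_n(q)$, because $[n]$ then appears in the numerator $[2k-1]!$ while both $[k]!$ and $[k-1]!$ have only indices $< n$, so the congruence degenerates to $0 \equiv 0$. For $1 \le k \le n/2$ the substitution above is valid term by term, and there is no real obstacle — the proof is essentially bookkeeping of the arithmetic progression sum $S$.
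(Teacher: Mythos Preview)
Your argument is correct. The reduction $[n-j]\equiv -q^{-j}[j]\pmod{\Phi_n(q)}$ is valid since $q$ is invertible modulo $\Phi_n(q)$ (one has $\Phi_n(0)=1$ for $n\ge 2$), and your bookkeeping of the sign $(-1)^k$, the exponent $S=\sum_{j=k}^{2k-1}j=k(3k-1)/2$, and the identification $\prod_{j=k}^{2k-1}[j]/[k]!=\tbnq{2k-1}{k}$ is accurate. The degenerate case $n/2<k\le n-1$ is also handled correctly: then $2k-1\ge n$, so $[n]$ (and hence $\Phi_n(q)$) divides the numerator of $\tbnq{2k-1}{k}$ while no factor $[j]$ with $1\le j\le k\le n-1$ in the denominator is divisible by $\Phi_n(q)$.

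As for comparison, the paper does not actually prove this lemma: it is quoted from \cite[Lemma~3.3]{Tauraso} without argument. Your direct computation is exactly the standard proof one finds in that reference, so there is no methodological difference to discuss.
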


\begin{lem}\label{lemm-b}
Let $n$ be a nonnegative integer. Then
\begin{align*}
(1-q^n)\sum_{k=0}^{\lfloor
n/2\rfloor}\frac{(-1)^kq^{k(k-1)/2}}{1-q^{n-k}}\binq{n-k}{k}=\theta_n(q).
\end{align*}
\end{lem}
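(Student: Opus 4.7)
The plan is to reduce the rational identity to a polynomial identity for the simpler sum
\[
U_n(q):=\sum_{k=0}^{\lfloor n/2\rfloor}(-1)^k q^{k(k-1)/2}\binq{n-k}{k},
\]
for which a clean three-term $q$-recurrence can be derived. Once the left-hand side of the lemma is expressed as $U_n(q)-q^{n-1}U_{n-2}(q)$ and the closed form of $U_n(q)$ is in hand, the three cases of $\theta_n(q)$ fall out by residue class.

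The first step is to clear the denominator. Writing
\[
\frac{1-q^n}{1-q^{n-k}}=1+\frac{q^{n-k}(1-q^k)}{1-q^{n-k}}
\]
and using the identity $\binq{n-k}{k}(1-q^k)/(1-q^{n-k})=\binq{n-k-1}{k-1}$, which is immediate from the definition of $\binq{\cdot}{\cdot}$, the left-hand side of the lemma splits as $U_n(q)$ plus a residual sum. Reindexing the residual by $k\mapsto k+1$ collapses the exponent $k(k-1)/2+n-k$ into $j(j-1)/2+n-1$ and yields
\[
(1-q^n)\sum_{k=0}^{\lfloor n/2\rfloor}\frac{(-1)^k q^{k(k-1)/2}}{1-q^{n-k}}\binq{n-k}{k}=U_n(q)-q^{n-1}U_{n-2}(q).
\]

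The second and principal step is to establish the three-term recurrence
\[
U_n(q)=-q^{n-2}\,U_{n-3}(q)\qquad(n\geq 3),
\]
with base cases $U_0=U_1=1$ and $U_2=0$. The route I would take applies the $q$-Pascal relation $\binq{n-k}{k}=\binq{n-k-1}{k-1}+q^k\binq{n-k-1}{k}$ inside $U_n(q)$, shifts the index $k\mapsto k+1$ in the first piece, and rewrites the resulting difference $\binq{n-k-1}{k}-\binq{n-k-2}{k}$ via $\binq{m}{k}-\binq{m-1}{k}=q^{m-k}\binq{m-1}{k-1}$. A further shift $k\mapsto k+1$ turns the exponent $k(k-3)/2$ back into $k(k-1)/2$, with an accumulated factor of exactly $-q^{n-2}$. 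The careful bookkeeping of these two $q$-Pascal substitutions and the two index shifts is the delicate technical obstacle.

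Iterating the recurrence from the base cases yields the Euler-pentagonal closed form
\[
U_{3m}=(-1)^m q^{m(3m-1)/2},\qquad U_{3m+1}=(-1)^m q^{m(3m+1)/2},\qquad U_{3m+2}=0,
\]
where the exponents come from the standard telescoping sum $\sum_{j=0}^{m-1}(3(m-j)-2)=m(3m-1)/2$ and its analogue. Substituting into $U_n(q)-q^{n-1}U_{n-2}(q)$ and splitting by $n\bmod 3$ finishes the proof: for $n=3m$ both terms survive and combine as $(-1)^m(1+q^m)q^{m(3m-1)/2}$; for $n=3m+1$ only $U_n$ contributes, since $U_{3m-1}=U_{3(m-1)+2}=0$; and for $n=3m+2$ only the term $-q^{n-1}U_{n-2}$ survives, giving $(-1)^{m+1}q^{(m+1)(3m+2)/2}$. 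In every case one recovers exactly the piecewise definition of $\theta_n(q)$ in Theorem~\ref{thm-a}.
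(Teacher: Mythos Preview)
The paper does not actually prove this lemma; it merely quotes it from Liu and Qi \cite[Lemma 2.1]{Liu-b} (see the sentence introducing Lemmas~\ref{lemm-a} and~\ref{lemm-b}). So there is no proof in the paper to compare against, and your proposal goes well beyond what the paper supplies.

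Your argument is correct. The reduction of the left-hand side to $U_n(q)-q^{n-1}U_{n-2}(q)$ via
\[
\frac{1-q^n}{1-q^{n-k}}=1+\frac{q^{n-k}(1-q^k)}{1-q^{n-k}},\qquad
\binq{n-k}{k}\frac{1-q^k}{1-q^{n-k}}=\binq{n-k-1}{k-1},
\]
followed by $k\mapsto k+1$, is valid term by term (the $k=0$ contribution to the residual vanishes, and the upper limit matches $\lfloor(n-2)/2\rfloor$). The recurrence $U_n(q)=-q^{n-2}U_{n-3}(q)$ follows exactly along the line you sketch: one $q$-Pascal step plus a shift gives $U_n=\sum_k(-1)^kq^{k(k+1)/2}\bigl[\tbnq{n-k-1}{k}-\tbnq{n-k-2}{k}\bigr]$, the other $q$-Pascal identity $\tbnq{m}{k}-\tbnq{m-1}{k}=q^{m-k}\tbnq{m-1}{k-1}$ and a second shift then produce the factor $-q^{n-2}$ with the exponent collapsing back to $k(k-1)/2$. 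The pentagonal closed forms for $U_{3m},U_{3m+1},U_{3m+2}$ and the three-case verification against $\theta_n(q)$ are straightforward and check out. In short, you have supplied a clean self-contained proof of a lemma the paper only cites.
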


Now we start to prove Theorem \ref{thm-a}.

\begin{proof}[Proof of Theorem \ref{thm-a}]
It is routine to see that
\begin{align}
\bigg(\!\!\binom{an}{bn}\!\!\bigg)_q'=\binq{an}{bn}+\sum_{k=1}^{\lfloor
n/2\rfloor}q^{k(k+bn)}\binq{an}{k}\binq{an-k}{bn+k}.
\label{eq:wei-g}
\end{align}
Noticing the relation
\begin{align*}
1-q^{an}=(1-q^n)(1+q^n+\cdots+q^{an-n})\equiv
a(1-q^n)\pmod{\Phi_n(q)^2},
\end{align*}
we can proceed as follows:
\begin{align}
\binq{an}{k}&=\frac{(1-q^{an})(1-q^{an-1})\cdots(1-q^{an-k+1})}{(1-q)(1-q^{2})\cdots(1-q^{k})}
\notag\\[2mm]
&\equiv\frac{a(1-q^{n})}{(1-q^{k})}\frac{(1-q^{-1})\cdots(1-q^{-k+1})}{(1-q)(1-q^{2})\cdots(1-q^{k-1})}
\notag\\[2mm]
&=\frac{a(1-q^{n})(-1)^{k-1}q^{-k(k-1)/2}}{1-q^{k}}
\notag\\[2mm]
&\equiv\frac{a(1-q^{n})(-1)^{k}q^{-k(k+1)/2}}{1-q^{n-k}}\pmod{\Phi_n(q)^2}.
\label{eq:wei-h}
\end{align}
With the help of Lemma \ref{lemm-a}, there holds
\begin{align}
\binq{an-k}{bn+k}&=\binq{an}{bn}\frac{(1-q^{an-bn})(1-q^{an-bn-1})\cdots(1-q^{an-bn-2k+1})}{(1-q^{an})\cdots(1-q^{an-k+1})(1-q^{bn+1})\cdots(1-q^{bn+k})}
\notag\\[2mm]
&\equiv\binq{an}{bn}\frac{a-b}{a}\frac{(1-q^{-1})(1-q^{-2})\cdots(1-q^{-2k+1})}{(1-q^{-1})\cdots(1-q^{-k+1})(1-q)\cdots(1-q^{k})}
\notag\\[2mm]
&=\binq{an}{bn}\frac{a-b}{a}(-1)^kq^{-k(3k-1)/2}\binq{2k-1}{k}
\notag\\[2mm]
 &\equiv\binq{an}{bn}\frac{a-b}{a}\binq{n-k}{k}
\pmod{\Phi_n(q)}. \label{eq:wei-i}
\end{align}
The combination of \eqref{eq:wei-g}-\eqref{eq:wei-i} gives
\begin{align*}
\bigg(\!\!\binom{an}{bn}\!\!\bigg)_q'\equiv\binq{an}{bn}+\binq{an}{bn}(a-b)(1-q^n)\sum_{k=1}^{\lfloor
n/2\rfloor}\frac{(-1)^kq^{k(k-1)/2}}{1-q^{n-k}}\binq{n-k}{k}\pmod{\Phi_n(q)^2}.
\end{align*}
Evaluating the series on the right-hand side by Lemma \ref{lemm-b},
we arrive at \eqref{eq:wei-a}.

\end{proof}

\section{Proof of Theorem \ref{thm-b}}

For proving Theorem \ref{thm-b}, we draw support from Lemmas
\ref{lemm-a} and \ref{lemm-b} and the following lemma (cf.
\cite[Lemma 2.2]{Liu-b}).

\begin{lem}\label{lemm-c}
Let $n$ be a nonnegative integer. Then
\begin{align*}
(1-q^n)\sum_{k=0}^{\lfloor
n/2\rfloor}\frac{(-1)^kq^{k(k-3)/2}}{1-q^{n-k}}\binq{n-k}{k}=\vartheta_n(q).
\end{align*}
\end{lem}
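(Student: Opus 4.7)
I would prove Lemma \ref{lemm-c} by leveraging its companion Lemma \ref{lemm-b} together with a classical closed-form evaluation of a Schur-like sum. Write $\Lambda_n^\vartheta(q)$ for the left-hand side of Lemma \ref{lemm-c} and $\Lambda_n^\theta(q)$ for that of Lemma \ref{lemm-b}.

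The first observation is that the two summands differ only by the factor $q^k$, since $q^{k(k-1)/2}=q^k\cdot q^{k(k-3)/2}$. Taking the difference, using $q^k-1=-(1-q^k)$ together with the elementary identity $(1-q^k)\binq{n-k}{k}=(1-q^{n-k})\binq{n-k-1}{k-1}$ to cancel the denominator $1-q^{n-k}$, and reindexing by $j=k-1$, I obtain
\[\Lambda_n^\theta(q)-\Lambda_n^\vartheta(q)=q^{-1}(1-q^n)\widetilde{A}_{n-2}(q),\qquad\widetilde{A}_m(q):=\sum_{k=0}^{\lfloor m/2\rfloor}(-1)^kq^{k(k-1)/2}\binq{m-k}{k}.\]
Invoking Lemma \ref{lemm-b}, this yields $\Lambda_n^\vartheta(q)=\theta_n(q)-q^{-1}(1-q^n)\widetilde{A}_{n-2}(q)$. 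It therefore suffices to establish the closed form
\[\widetilde{A}_m(q)=\begin{cases}(-1)^kq^{k(3k-1)/2}&\text{if }m=3k,\\(-1)^kq^{k(3k+1)/2}&\text{if }m=3k+1,\\ 0&\text{if }m=3k+2,\end{cases}\]
a polynomial analogue of Euler's pentagonal number theorem: substituting this and the three-case formula for $\theta_n(q)$ into the previous identity, a direct case-by-case computation on $n\bmod 3$ produces exactly the three-case formula for $\vartheta_n(q)$ stated in the lemma.

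The main obstacle is justifying the closed form for $\widetilde{A}_m(q)$. I would prove it by induction on $m$ using the recurrence $\widetilde{A}_m(q)=\widetilde{A}_{m-1}(q)-q^{m-2}A_{m-2}(q)$ with $A_m(q):=\sum_k(-1)^kq^{k(k-3)/2}\binq{m-k}{k}$; this follows from the $q$-Pascal rule $\binq{m-k}{k}=\binq{m-k-1}{k}+q^{m-2k}\binq{m-k-1}{k-1}$ by the same reindexing as above. The recurrence couples $\widetilde{A}$ with $A$, which in turn couples via an analogous recurrence to a further sum carrying exponent $k(k-5)/2$, and so on. Although the system expands, the conjectured pentagonal-number closed forms for each sum (readily guessed from small values) can be verified to satisfy the coupled recurrences, closing the proof together with the base cases $\widetilde{A}_0=\widetilde{A}_1=1$. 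The bookkeeping of this coupled system is the genuine labour of the proof; everything else is elementary manipulation or a case check.
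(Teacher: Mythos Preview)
The paper does not prove this lemma at all: it is quoted verbatim from \cite[Lemma~2.2]{Liu-b} and used as a black box, so there is no ``paper's own proof'' to compare against. Your proposal therefore supplies something the paper omits.

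Your reduction is correct. The computation $\Lambda_n^\theta-\Lambda_n^\vartheta=q^{-1}(1-q^n)\widetilde{A}_{n-2}(q)$ is valid, and the three-case verification that $\theta_n(q)-q^{-1}(1-q^n)\widetilde{A}_{n-2}(q)=\vartheta_n(q)$ goes through (I checked all residues of $n$ modulo~$3$). The closed form you quote for $\widetilde{A}_m(q)=\sum_k(-1)^kq^{\binom{k}{2}}\tbnq{m-k}{k}$ is a well-known finite pentagonal-number identity, so the argument is sound.

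The only weakness is your plan for proving that closed form: the cascade of auxiliary sums with exponents $k(k-3)/2$, $k(k-5)/2$, \dots\ is more laborious than necessary. If instead you apply the \emph{other} $q$-Pascal rule $\tbnq{m-k}{k}=q^k\tbnq{m-1-k}{k}+\tbnq{m-1-k}{k-1}$ to $\widetilde{A}_m$, you get $\widetilde{A}_m=D_{m-1}-D_{m-2}$ with $D_m=\sum_k(-1)^kq^{k(k+1)/2}\tbnq{m-k}{k}$; applying the first Pascal rule to $D_m$ gives $D_m=D_{m-1}-q^{m-1}\widetilde{A}_{m-2}$. Eliminating $D$ yields the single clean recurrence
\[
\widetilde{A}_m(q)=-q^{\,m-2}\,\widetilde{A}_{m-3}(q),
\]
from which the pentagonal closed form is immediate given $\widetilde{A}_0=\widetilde{A}_1=1$, $\widetilde{A}_2=0$. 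This replaces the open-ended ``coupled system'' by a two-line induction.
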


Now we start to prove Theorem \ref{thm-b}.

\begin{proof}[Proof of Theorem \ref{thm-b}]
Replacing $k$ by $an-bn-k$ in $\tau_0(an,bn,q)'$, we have
\begin{align}
\tau_0(an,bn,q)'&=\sum_{k=0}^{\lfloor n/2\rfloor}
(-1)^{an-bn-k}q^{(an-bn-k)(an+bn+k+1)/2}\binq{an}{bn+k}\binq{2bn+2k}{k}
\notag\notag\\[2mm]
&=(-1)^{an-bn}q^{(an-bn)(an+bn+1)/2}\binq{an}{bn}
\notag\\[2mm]
&\quad+\sum_{k=1}^{\lfloor n/2\rfloor}
(-1)^{an-bn-k}q^{(an-bn-k)(an+bn+k+1)/2}\binq{an}{bn+k}\binq{2bn+2k}{k}.
 \label{eq:wei-j}
\end{align}
It is not difficult to verify that
\begin{align}
\binq{an}{bn+k}&=\binq{an}{bn}\frac{(1-q^{an-bn})(1-q^{an-bn-1})\cdots(1-q^{an-bn-k+1})}{(1-q^{bn+1})(1-q^{bn+2})\cdots(1-q^{bn+k})}
\notag\\[2mm]
&\equiv\binq{an}{bn}\frac{(a-b)(1-q^{n})}{(1-q^{k})}\frac{(1-q^{-1})\cdots(1-q^{-k+1})}{(1-q)(1-q^{2})\cdots(1-q^{k-1})}
\notag\\[2mm]
&=\binq{an}{bn}\frac{(a-b)(1-q^{n})(-1)^{k-1}q^{-k(k-1)/2}}{1-q^{k}}
\notag\\[2mm]
&\equiv\binq{an}{bn}\frac{(a-b)(1-q^{n})(-1)^{k}q^{-k(k+1)/2}}{1-q^{n-k}}\pmod{\Phi_n(q)^2}.
\label{eq:wei-k}
\end{align}
In terms  of Lemma \ref{lemm-a}, there is
\begin{align}
\binq{2bn+2k}{k}&=\frac{(1-q^{2bn+2k})(1-q^{2bn+2k-1})\cdots(1-q^{2bn+k+1})}{(1-q)(1-q^{2})\cdots(1-q^{k})}
\notag\\[2mm]
&\equiv(1+q^k)\binq{2k-1}{k}
\notag\\[2mm]
&\equiv(-1)^kq^{k(3k-1)/2}(1+q^k)\binq{n-k}{k} \pmod{\Phi_n(q)}.
\label{eq:wei-l}
\end{align}
The combination of \eqref{eq:wei-j}-\eqref{eq:wei-l} produces
\begin{align*}
\tau_0(an,bn,q)'&\equiv(-1)^{an-bn}q^{(an-bn)(an+bn+1)/2}\binq{an}{bn}+(-1)^{an-bn}q^{(an-bn)(an+bn+1)/2}\binq{an}{bn}
\notag\\[2mm]
&\quad\times(a-b)(1-q^n)\sum_{k=1}^{\lfloor
n/2\rfloor}\frac{(-1)^{k}q^{k(k-3)/2}(1+q^k)}{1-q^{n-k}}\binq{n-k}{k}\pmod{\Phi_n(q)^2}.
\end{align*}
Calculating the series on the right-hand side by Lemmas \ref{lemm-b}
and \ref{lemm-c}, we deduce \eqref{eq:wei-b}.
\end{proof}

\section{Proof of Theorems \ref{thm-c}-\ref{thm-f}}
Firstly, we shall prove Theorem \ref{thm-c}.

\begin{proof}[Proof of Theorem \ref{thm-c}]
Replace $k$ by $an-bn-k$ in $T_0(an,bn,q)'$ to find
\begin{align}
T_0(an,bn,q)'&=\sum_{k=0}^{\lfloor n/2\rfloor}
(-1)^{an-bn-k}\binq{an}{bn+k}_{q^2}\binq{2bn+2k}{k}
\notag\\[2mm]
&=(-1)^{an-bn}\binq{an}{bn}_{q^2} +\sum_{k=1}^{\lfloor n/2\rfloor}
(-1)^{an-bn-k}\binq{an}{bn+k}_{q^2}\binq{2bn+2k}{k}.
 \label{eq:wei-m}
\end{align}
Similar to the derivation of \eqref{eq:wei-k}, we obtain
\begin{align}
\binq{an}{bn+k}_{q^2}&=\binq{an}{bn}_{q^2}\frac{(1-q^{2an-2bn})(1-q^{2an-2bn-2})\cdots(1-q^{2an-2bn-2k+2})}{(1-q^{2bn+2})(1-q^{2bn+4})\cdots(1-q^{2bn+2k})}
\notag\\[2mm]
&\equiv\binq{an}{bn}_{q^2}\frac{2(a-b)(1-q^{n})}{(1-q^{2k})}\frac{(1-q^{-2})\cdots(1-q^{-2k+2})}{(1-q^2)(1-q^{4})\cdots(1-q^{2k-2})}
\notag\\[2mm]
&=\binq{an}{bn}_{q^2}\frac{2(a-b)(1-q^{n})(-1)^{k-1}q^{-k(k-1)}}{(1-q^{2k})}
\notag
\end{align}
\begin{align}
&\equiv\binq{an}{bn}_{q^2}\frac{2(a-b)(1-q^{n})(-1)^{k}q^{-k^2}}{(1+q^k)(1-q^{n-k})}\pmod{\Phi_n(q)^2}.
\label{eq:wei-n}
\end{align}
Substituting \eqref{eq:wei-l} and \eqref{eq:wei-n} into
\eqref{eq:wei-m}, we get
\begin{align*}
T_0(an,bn,q)'&\equiv(-1)^{an-bn}\binq{an}{bn}_{q^2}+(-1)^{an-bn}\binq{an}{bn}_{q^2}2(a-b)(1-q^n)
\\[2mm]
&\quad\times\sum_{k=1}^{\lfloor
n/2\rfloor}\frac{(-1)^{k}q^{k(k-1)/2}}{1-q^{n-k}}\binq{n-k}{k}\pmod{\Phi_n(q)^2}.
\end{align*}
Evaluating the series on the right-hand side by Lemma \ref{lemm-b},
we catch hold of \eqref{eq:wei-c}.
\end{proof}

Secondly, we shall prove Theorem \ref{thm-d}.

\begin{proof}[Proof of Theorem \ref{thm-d}]
Replace $k$ by $an-bn-k$ in $T_1(an,bn,q)'$ to derive
\begin{align*}
T_1(an,bn,q)'&=\sum_{k=0}^{\lfloor n/2\rfloor}
(-q)^{an-bn-k}\binq{an}{bn+k}_{q^2}\binq{2bn+2k}{k}
\notag\\[2mm]
&=(-q)^{an-bn}\binq{an}{bn}_{q^2} +\sum_{k=1}^{\lfloor n/2\rfloor}
(-q)^{an-bn-k}\binq{an}{bn+k}_{q^2}\binq{2bn+2k}{k}
\notag\\[2mm]
&\equiv(-q)^{an-bn}\binq{an}{bn}_{q^2}+(-q)^{an-bn}\binq{an}{bn}_{q^2}2(a-b)(1-q^n)
\notag\\[2mm]
&\quad\times\sum_{k=1}^{\lfloor
n/2\rfloor}\frac{(-1)^{k}q^{k(k-3)/2}}{1-q^{n-k}}\binq{n-k}{k}\pmod{\Phi_n(q)^2},
\end{align*}
where we have employed \eqref{eq:wei-l} and \eqref{eq:wei-n}.
Calculating the series on the right-hand side by Lemma \ref{lemm-c},
we are led to \eqref{eq:wei-d}.
\end{proof}

For the aim to prove Theorem \ref{thm-e}, we need the following
lemma.

\begin{lem}\label{lemm-d}
Let $n$ be a nonnegative integer. Then
\begin{align}
(1-q^n)\sum_{k=0}^{\lfloor
n/2\rfloor}\frac{(-1)^kq^{k(3k-1)/2}}{1-q^{n-k}}\binq{n-k}{k}\equiv\theta_n(q^{-1})\pmod{\Phi_n(q)^2}.
\label{eq:wei-aa}
\end{align}
\end{lem}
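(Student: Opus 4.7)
The plan is to derive Lemma \ref{lemm-d} from Lemma \ref{lemm-b} by applying the substitution $q\mapsto q^{-1}$ and then absorbing the discrepancy modulo $\Phi_n(q)^2$.

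First I would record the two standard inversion identities
\[
1-q^{-r}=-q^{-r}(1-q^r),\qquad \binq{n-k}{k}_{q^{-1}}=q^{-k(n-2k)}\binq{n-k}{k},
\]
and substitute $q\mapsto q^{-1}$ in Lemma \ref{lemm-b}. A direct bookkeeping of the signs and powers of $q$ collapses to the exact identity
\begin{align*}
(1-q^n)\sum_{k=0}^{\lfloor n/2\rfloor}\frac{(-1)^k q^{k(3k-1)/2}\,q^{-kn}}{1-q^{n-k}}\binq{n-k}{k}=\theta_n(q^{-1}),
\end{align*}
where the total $q$-exponent attached to the $k$-th summand comes out to $-kn+\tfrac{k(3k-1)}{2}$ after combining the terms $-\tfrac{k(k-1)}{2}$, $-k(n-2k)$, $+(n-k)$, and the overall prefactor $-q^{-n}$.

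Next I would form the difference between this exact identity and the sum in \eqref{eq:wei-aa}, which equals
\begin{align*}
(1-q^n)\sum_{k=1}^{\lfloor n/2\rfloor}\frac{(-1)^kq^{k(3k-1)/2}(1-q^{-kn})}{1-q^{n-k}}\binq{n-k}{k}.
\end{align*}
The $k=0$ term drops out because $1-q^{-kn}$ vanishes there, which is the crucial observation since the $k=0$ denominator $1-q^n$ would otherwise carry a factor of $\Phi_n(q)$. For $1\le k\le\lfloor n/2\rfloor$, writing $1-q^{-kn}=-q^{-kn}(1-q^n)(1+q^n+\cdots+q^{(k-1)n})$ extracts one more factor of $(1-q^n)$, and in this range $1-q^{n-k}$ is coprime to $\Phi_n(q)$ (since $n\nmid n-k$). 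Hence the difference is divisible by $(1-q^n)^2$, which in turn is divisible by $\Phi_n(q)^2$, yielding \eqref{eq:wei-aa}.

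The only subtle point is managing the denominators: I must verify that the $k=0$ term causes no difficulty (resolved by the vanishing of $1-q^{-kn}$ at $k=0$) and that the remaining denominators $1-q^{n-k}$ for $1\le k\le\lfloor n/2\rfloor$ are units modulo $\Phi_n(q)^2$. Both checks are immediate, so the main work is the exponent arithmetic in the substitution step; the congruence step itself is a clean $(1-q^n)^2$ argument.
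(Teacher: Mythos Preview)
Your proposal is correct and follows essentially the same route as the paper: substitute $q\mapsto q^{-1}$ in Lemma~\ref{lemm-b} to obtain the exact identity with exponent $k(3k-1)/2-kn$, then absorb the factor $q^{-kn}$ using $q^n\equiv 1\pmod{\Phi_n(q)}$ together with the prefactor $(1-q^n)$. Your treatment is in fact more careful than the paper's, which simply remarks that the last step is ``ordinary'' without isolating the $k=0$ term or checking the denominators.
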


\begin{proof}
Performing the replacement $q\to q^{-1}$ in Lemma \ref{lemm-b},
there holds
\begin{align*}
(1-q^{n})\sum_{k=0}^{\lfloor
n/2\rfloor}\frac{(-1)^kq^{k(3k-1)/2-kn}}{1-q^{n-k}}\binq{n-k}{k}
=\theta_n(q^{-1}).
\end{align*}
Considering that $q^n\equiv1\pmod{\Phi_n(q)}$, it is ordinary to
achieve \eqref{eq:wei-aa}.
\end{proof}

Thirdly, we shall prove Theorem \ref{thm-e}.

\begin{proof}[Proof of Theorem \ref{thm-e}]
Replacing $k$ by $an-bn-k$ in $t_0(an,bn,q)'$ and using
\eqref{eq:wei-l} and \eqref{eq:wei-n}, we have
\begin{align*}
t_0(an,bn,q)'&=\sum_{k=0}^{\lfloor n/2\rfloor}
(-1)^{an-bn-k}q^{(an-bn-k)^2}\binq{an}{bn+k}_{q^2}\binq{2bn+2k}{k}
\notag\\[2mm]
&=(-1)^{an-bn}q^{(an-bn)^2}\binq{an}{bn}_{q^2}
\\[2mm]
&\quad+\sum_{k=1}^{\lfloor n/2\rfloor}
(-1)^{an-bn-k}q^{(an-bn-k)^2}\binq{an}{bn+k}_{q^2}\binq{2bn+2k}{k}
\notag
\end{align*}
\begin{align*}
&\equiv(-1)^{an-bn}q^{(an-bn)^2}\binq{an}{bn}_{q^2}+(-1)^{an-bn}q^{(an-bn)^2}\binq{an}{bn}_{q^2}2(a-b)(1-q^n)
\notag\\[2mm]
&\quad\times\sum_{k=1}^{\lfloor
n/2\rfloor}\frac{(-1)^{k}q^{k(3k-1)/2}}{1-q^{n-k}}\binq{n-k}{k}\pmod{\Phi_n(q)^2}.
\end{align*}
Via Lemma \ref{lemm-d} and the last relation, we can deduce
\eqref{eq:wei-e}.
\end{proof}

For the sake of proving Theorem \ref{thm-f}, we demand the following
lemma.

\begin{lem}\label{lemm-e}
Let $n$ be a nonnegative integer. Then
\begin{align}
(1-q^n)\sum_{k=0}^{\lfloor
n/2\rfloor}\frac{(-1)^kq^{k(3k+1)/2}}{1-q^{n-k}}\binq{n-k}{k}\equiv\upsilon_n(q^{-1})\pmod{\Phi_n(q)^2}.
\label{eq:wei-bb}
\end{align}
\end{lem}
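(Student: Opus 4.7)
The plan is to mimic the proof of Lemma \ref{lemm-d} almost verbatim, starting from Lemma \ref{lemm-c} in place of Lemma \ref{lemm-b}; the function $\upsilon_n$ is understood to be set up precisely so that $\upsilon_n(q^{-1})$ records the image of $\vartheta_n(q)$ under the involution $q\mapsto q^{-1}$, by analogy with the role played by $\theta_n(q^{-1})$ in Lemma \ref{lemm-d}.

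First I would substitute $q\mapsto q^{-1}$ into the exact identity furnished by Lemma \ref{lemm-c}. Applying $1-q^{-m}=-q^{-m}(1-q^{m})$ to the prefactor and the denominators, together with the $q$-binomial reflection $\binq{n-k}{k}_{q^{-1}}=q^{-k(n-2k)}\binq{n-k}{k}$, the three exponent contributions $-k(k-3)/2$, $-k$ (arising from the ratio $\tfrac{1-q^{-n}}{1-q^{-(n-k)}}=q^{-k}\tfrac{1-q^{n}}{1-q^{n-k}}$), and $-k(n-2k)$ consolidate to $k(3k+1)/2-kn$. This produces the exact identity
$$(1-q^{n})\sum_{k=0}^{\lfloor n/2\rfloor}\frac{(-1)^{k}q^{k(3k+1)/2-kn}}{1-q^{n-k}}\binq{n-k}{k}=\upsilon_{n}(q^{-1}).$$

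Second I would clear the spurious factor $q^{-kn}$ modulo $\Phi_{n}(q)^{2}$. Writing $q^{-kn}=1+(q^{-kn}-1)$, each summand differs from its $q^{-kn}\mapsto 1$ counterpart by a multiple of $(1-q^{n})(q^{-kn}-1)$; both factors are divisible by $\Phi_{n}(q)$, while $1-q^{n-k}$ is coprime to $\Phi_{n}(q)$ for $1\le k\le\lfloor n/2\rfloor$ (since $\Phi_n(q)\mid 1-q^m$ iff $n\mid m$), and at $k=0$ the factor $q^{-kn}$ equals $1$ identically, so no denominator issue arises there. Hence the correction lies in $\Phi_{n}(q)^{2}$ and \eqref{eq:wei-bb} follows. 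The only genuine obstacle is the exponent bookkeeping in the first step, which is purely mechanical once the three sources of $q$-power are tracked carefully, together with confirming that the right-hand side of Lemma \ref{lemm-c} transforms to $\upsilon_n(q^{-1})$ under $q\mapsto q^{-1}$ — essentially a definitional check on $\upsilon_n$.
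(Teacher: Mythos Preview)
Your proposal is correct and follows exactly the same route as the paper: apply $q\mapsto q^{-1}$ to Lemma~\ref{lemm-c} to obtain the exact identity with exponent $k(3k+1)/2-kn$, then invoke $q^{n}\equiv 1\pmod{\Phi_n(q)}$ to drop the $q^{-kn}$. Your write-up is in fact more careful than the paper's, which omits the exponent bookkeeping and the coprimality check on the denominators $1-q^{n-k}$ that you spell out.
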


\begin{proof}
Performing the replacement $q\to q^{-1}$ in Lemma \ref{lemm-c},
there is
\begin{align*}
(1-q^{n})\sum_{k=0}^{\lfloor
n/2\rfloor}\frac{(-1)^kq^{k(3k+1)/2-kn}}{1-q^{n-k}}\binq{n-k}{k}
=\upsilon_n(q^{-1}).
\end{align*}
Observing that $q^n\equiv1\pmod{\Phi_n(q)}$, it is regular to attain
\eqref{eq:wei-bb}.
\end{proof}

Finally, we shall prove Theorem \ref{thm-f}.

\begin{proof}[Proof of Theorem \ref{thm-f}]
Replacing $k$ by $an-bn-k$ in $t_1(an,bn,q)'$ and utilizing
\eqref{eq:wei-l} and \eqref{eq:wei-n}, we catch hold of
\begin{align*}
t_1(an,bn,q)'&=\sum_{k=0}^{\lfloor n/2\rfloor}
(-1)^{an-bn-k}q^{(an-bn-k)(an-bn-k-1)}\binq{an}{bn+k}_{q^2}\binq{2bn+2k}{k}
\\[2mm]
&=(-1)^{an-bn}q^{(an-bn)(an-bn-1)}\binq{an}{bn}_{q^2}
\notag\\[2mm]
&\quad+\sum_{k=1}^{\lfloor n/2\rfloor}
(-1)^{an-bn-k}q^{(an-bn-k)(an-bn-k-1)}\binq{an}{bn+k}_{q^2}\binq{2bn+2k}{k}
\notag\\[2mm]
&\equiv(-1)^{an-bn}q^{(an-bn)(an-bn-1)}\binq{an}{bn}_{q^2}
\notag\\[2mm]
&\quad+(-1)^{an-bn}q^{(an-bn)(an-bn-1)}\binq{an}{bn}_{q^2}2(a-b)(1-q^n)
\notag\\[2mm]
&\qquad\times\sum_{k=1}^{\lfloor
n/2\rfloor}\frac{(-q)^{k}q^{k(3k+1)/2}}{1-q^{n-k}}\binq{n-k}{k}\pmod{\Phi_n(q)^2}.
\end{align*}
Through Lemma \ref{lemm-e} and the last formula, we are led to
\eqref{eq:wei-f}.
\end{proof}


\end{document}